\newcommand\R{\mathbb{R}}
\newcommand\reals{\mathbb{R}}
\newcommand\Q{\mathbb{Q}}
\newcommand\Z{\mathbb{Z}}
\newcommand\GRS{\mathfrak{\Gamma}}
\newcommand\tensor{\otimes}
\newcommand\Diff{\textup{Diff}}
\newcommand\boundary{\partial}
\newcommand\del{\partial}
\newcommand\im{\mathrm{im}}
\newcommand\id{\mathrm{Id}}
\newcommand\wt[1]{\widetilde{#1}}
\newcommand\ol{\overline}
\newcommand\coker{\textup{coker}}
\newcommand\xra{\xrightarrow}
\newcommand{\cyl}{\mathrm{cyl}}
\newcommand{\fr}{\mathrm{fr}}
\theoremstyle{plain}
\newtheorem{theorem}{Theorem}[section]
\newtheorem{lemma}[theorem]{Lemma}
\newtheorem{corollary}[theorem]{Corollary}
\newtheorem{conjecture}[theorem]{Conjecture}
\theoremstyle{definition}
\theoremstyle{remark}
\newtheorem{remark}[theorem]{Remark}
\title{The derivative map for diffeomorphism of disks:\\ An example}
\author{Diarmuid Crowley, Thomas Schick and Wolfgang Steimle}
\date{\today}
\begin{document}
\maketitle

%
%


\begin{abstract}
We prove that the derivative map $d \colon \Diff_\del(D^k) \to \Omega^kSO_k$, defined
by taking the derivative of a diffeomorphism, can induce a nontrivial map on homotopy groups.
Specifically, for $k = 11$ we prove that the following homomorphism is non-zero:
$$ d_* \colon \pi_5\Diff_\del(D^{11}) \to \pi_{5}\Omega^{11}SO_{11} \cong \pi_{16}SO_{11} $$
As a consequence we give a counter-example to a 
conjecture of Burghelea and Lashof and so give an example of a 
non-trivial vector bundle $E$ over a sphere
which is trivial as a topological $\R^k$-bundle
(the rank of $E$ is $k=11$ and the base sphere is $S^{17}$.)

The proof relies on 
a recent result of Burklund and Senger which determines the homotopy 17-spheres
bounding $8$-connected manifolds,
the plumbing approach to the Gromoll filtration due to Antonelli, Burghelea and Kahn,
and an explicit construction of low-codimension embeddings of certain homotopy spheres.
\end{abstract}

\section{Introduction}
The derivative map 
\[d \colon \Diff_\del(D^k) \to \mathrm{Map}((D^k, \del D^k), (SO_k, \id)) \simeq \Omega^k SO_k, \quad f \mapsto (x\mapsto D_x f)\]
is a basic invariant of the diffeomorphism group of the $k$-disk; in fact the first order approximation in the embedding calculus approach to the diffeomorphism group. 
While the rationalisation of $d$,
$d_\Q \colon \Diff_\del(D^k)_\Q \to (\Omega^kSO_k)_\Q$,
is null-homotopic,
as we explain in Section \ref{s:cr},
much less is known about the derivative map $d$ integrally.  For example, to the best of our knowledge,
it was not yet known whether the map induced by $d$ on homotopy groups,
\[d_* \colon \pi_i\Diff_\del(D^k) \to \pi_{i+k} SO_k,\]
was ever non-trivial.
Burghelea and Lashof  showed that 
$d_*$ vanishes 
for $i = 0, 1$.  At odd primes $p$, they also showed that $d_* = 0$ provided $i<k{-}3$
and they made a conjecture equivalent to the claim 
that this holds for $p = 2$ as well \cite[Conjecture p.\,\,40]{B-L}.
Burghelea and Lashof also report A'Campo informing them about a proof that $d_* = 0$ for $i = 2$ 
(however a written proof has not appeared). 

Using smoothing theory, or an explicit geometric construction we introduce here, the map $d_*$ admits an interpretation as describing the normal bundle of certain homotopy spheres embedded in euclidean space. Combining this interpretation with recent results of Burklund--Senger and the refined plumbing construction of Antonelli--Burghelea--Kahn, we obtain a counterexample to the
conjecture of Burghelea and Lashof.

In more detail, in \cite{A-B-K} Antonelli, Burghelea and Kahn constructed families of diffeomorphisms of the disk using a pairing
\[\sigma \colon \pi_p SO_{q-a} \otimes \pi_q SO_{p-b} \to \pi_{a+b+1}\Diff_\del(D^{p+q-a-b-1}) \]
($0\leq a\leq q, 0\leq b\leq p$), refining Milnor's plumbing pairing (see below).  Then we have:

\begin{theorem} \label{thm:main}
Let $\xi\in\pi_8SO_6 \cong \Z/ 24$ be a generator.
 The image of $\sigma(\xi,\xi)$ under the derivative map
\[d_* \colon \pi_5\Diff_\del(D^{11}) \to  \pi_{16}SO_{11}\]
is non-zero.
\end{theorem}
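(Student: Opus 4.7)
The plan is to combine a geometric interpretation of the derivative map as a normal bundle invariant with the ABK plumbing construction and the Burklund--Senger classification of homotopy 17-spheres.

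First, I would use the explicit geometric construction promised in the introduction (equivalently, smoothing theory) to identify $d_*(\sigma(\xi,\xi)) \in \pi_{16}SO_{11}$ with the classifying map of the normal bundle of a canonical codimension-$11$ embedding of a homotopy $17$-sphere $\Sigma^{17}$. Here $\Sigma^{17}$ is the homotopy sphere obtained from the family $\sigma(\xi,\xi) \in \pi_5 \Diff_\del(D^{11})$ by the clutching construction, and the derivative of the family furnishes precisely the normal bundle data for this embedding. This reduces the theorem to showing that a specific normal bundle class in $\pi_{16} SO_{11}$ is nontrivial.

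Next, I would unpack the ABK construction to realise $\Sigma^{17}$ as the boundary of an explicit $8$-connected $18$-manifold $W^{18}$ obtained by plumbing two copies of the rank-$6$ disk bundle over $S^8$ classified by $\xi$, with the codimension refinement that produces an $S^5$-family of diffeomorphisms rather than a single diffeomorphism. This places $\Sigma^{17}$ in the subgroup of $\Theta_{17}$ consisting of homotopy spheres bounding $8$-connected manifolds. I would then apply the Burklund--Senger theorem classifying such spheres to pin down $\Sigma^{17}$ as a specific nonzero element of $\Theta_{17}$, using that $\xi$ is a generator of $\Z/24$ and that the plumbing pairing is quadratic in its arguments (so that $\sigma(\xi,\xi)$ realises a distinguished invariant, Arf/Kervaire-type or signature-type, of $W^{18}$).

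Finally, nontriviality of $\Sigma^{17}$ together with the geometric description of the derivative map forces $d_*(\sigma(\xi,\xi))$ to be nonzero: triviality of the normal bundle would trivialise the embedding, exhibiting $\Sigma^{17}$ as a standard sphere and contradicting the previous step. The main obstacle, I expect, is this last implication -- translating the Burklund--Senger input, which concerns when a homotopy $17$-sphere bounds an $8$-connected manifold with prescribed invariants, into a statement about the \emph{unstable} normal bundle class in $\pi_{16} SO_{11}$ rather than merely its stable image in $\pi_{16} SO$. This will require a careful obstruction-theoretic analysis together with bookkeeping of the codimension built into the ABK refinement, to ensure the invariants detected by Burklund--Senger are visible through a rank-$11$ bundle and are not lost to unstable phenomena.
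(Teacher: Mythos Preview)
Your overall strategy is correct and matches the paper's: interpret $d_*$ as a normal bundle via Lemma~\ref{lem:embedding_of_homotopy_sphere}, identify $\Sigma_{\xi,\xi}=A(\sigma(\xi,\xi))$ as a specific element of $\Theta_{17}$ using the ABK square and Burklund--Senger, and then argue that this element cannot embed in $S^{28}$ with trivial normal bundle. Your steps~1 and~2 are essentially what the paper does (with step~2 fleshed out via Wall's invariants $\Phi(\varphi_W)$ and $\varphi_W(\chi_W)$ to show $\Sigma_{\xi,\xi}$ maps to $[\eta\eta_4]\in\coker(J_{17})$, not just to something nonzero).

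The genuine gap is your step~3. The sentence ``triviality of the normal bundle would trivialise the embedding, exhibiting $\Sigma^{17}$ as a standard sphere'' is false as stated: an exotic sphere can perfectly well embed with trivial normal bundle, and indeed every homotopy sphere does so in sufficiently high codimension. Nontriviality of $\Sigma^{17}$ in $\Theta_{17}$ by itself puts no constraint on the normal bundle of a codimension-$11$ embedding. What the paper actually uses is that if the normal bundle were trivial, then the Pontryagin--Thom collapse of a framed tubular neighborhood would produce a class in the \emph{unstable} group $\pi_{28}S^{11}$ whose stabilisation in $\pi_{17}^s$ represents $[\eta\eta_4]$ modulo $\im(J)$. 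Toda's tables show that the image of the stabilisation map $\pi_{28}S^{11}\to\pi_{17}^s$ is exactly $\Z/2(\eta^2\rho)\oplus\Z/2(\mu_{17})\oplus\Z/2(\nu\kappa)$ and misses the summand $\Z/2(\eta\eta_4)$, a contradiction. So the missing ingredient is not generic obstruction theory but a specific unstable computation from Toda; you should replace your final implication with this Pontryagin--Thom argument and the input that $\eta\eta_4$ does not desuspend to $S^{11}$.
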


Using Morlet's smoothing theory isomorphism, the derivative map $d_*$ on $\pi_k$ is identified with the boundary map 
\[ \del \colon \pi_{n+k+1}PL_k/O_k \to \pi_{n+k} SO_k\]
of the 
fibration sequence $SO_k \to SPL_k \to PL_k/O_k$
(and this allows for our interpretation of 
\cite[Conjecture p.\,\,40]{B-L} in terms of the derivative map). We conclude that the map $SO_{11}\to SPL_{11}$ is not injective on $\pi_{16}$.  More specifically, if $\tau_{11}\colon S^{11}\to BSO_{11}$ represents the tangent bundle of the $11$-sphere
and $f\colon S^{17}\to S^{11}$ represents the unique non-trivial homotopy class, we have:

\begin{corollary} \label{cor:main}
The pull-back $f^*\tau_{11}$ is a non-trivial vector bundle 
which becomes trivial as an $\R^{11}$-bundle (even when considered as a bundle with structure group $SPL_{11}$). 
\end{corollary}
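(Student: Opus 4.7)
My plan is to deduce the corollary from Theorem~\ref{thm:main} together with (i)~Morlet's smoothing-theory isomorphism identifying $d_*$ with a connecting homomorphism of a fibration sequence, and (ii)~an explicit geometric identification of the pullback bundle $f^*\tau_{11}$ with $d_*(\sigma(\xi,\xi))$.

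For the triviality as an $SPL_{11}$-bundle, I would invoke Morlet's smoothing theory to produce an equivalence $\Diff_\del(D^{11})\simeq \Omega^{12}(PL_{11}/O_{11})$ in the relevant range, under which the derivative map $d_*\colon \pi_5\Diff_\del(D^{11}) \to \pi_{16}SO_{11}$ corresponds to the boundary homomorphism $\del\colon \pi_{17}(PL_{11}/O_{11}) \to \pi_{16}SO_{11}$ of the fibration $SO_{11} \to SPL_{11} \to PL_{11}/O_{11}$. By exactness, $\im(\del)=\ker(\pi_{16}SO_{11}\to\pi_{16}SPL_{11})$, so any element in the image of $d_*$ becomes trivial in $\pi_{16}SPL_{11}$ and hence (via $SPL_{11}\to STop_{11}$) as a topological $\R^{11}$-bundle. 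Thus it suffices to show that $[f^*\tau_{11}]$ lies in $\im(d_*)$.

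The heart of the argument is to identify $[f^*\tau_{11}] \in \pi_{17}BSO_{11} \cong \pi_{16}SO_{11}$ with the element $d_*(\sigma(\xi,\xi))$. For this I would use the geometric interpretation of the derivative map mentioned in the introduction: a class in $\pi_5\Diff_\del(D^{11})$ yields, via a clutching construction, a twisted $17$-sphere embedded in $\R^{28}$ of codimension $11$, and its normal bundle represents the value of $d_*$ on that class. Tracing through the Antonelli--Burghelea--Kahn plumbing description of $\sigma(\xi,\xi)$, I expect to show that the resulting twisted sphere is the standard $S^{17}$, embedded in $\R^{28}$ in a way classified (up to concordance) by the unique nontrivial homotopy class $f\in \pi_{17}(S^{11})$; the normal bundle of such an embedding is precisely $f^*\tau_{11}$. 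Granting this identification, nontriviality of $f^*\tau_{11}$ as a smooth vector bundle follows at once from Theorem~\ref{thm:main}.

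The main obstacle is this last identification: explicitly matching the normal bundle of the embedded twisted sphere arising from $\sigma(\xi,\xi)$ with $f^*\tau_{11}$. This requires combining the refined plumbing pairing with the explicit low-codimension embedding technique advertised in the abstract, together with a careful accounting of how the fiber inclusion $S^{11}\to BSO_{11}$ in the fibration $S^{11}\to BSO_{11}\to BSO_{12}$ realizes the tangent classifier $\tau_{11}$. Without such a matching one can only conclude, as in the paragraph preceding the corollary, that \emph{some} nonzero element of $\pi_{16}SO_{11}$ maps to zero in $\pi_{16}SPL_{11}$, not that $f^*\tau_{11}$ itself does.
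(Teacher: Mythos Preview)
Your treatment of the triviality as an $SPL_{11}$-bundle via Morlet's identification of $d_*$ with the boundary map is correct and matches the paper. The gap is in your proposed identification of $d_*(\sigma(\xi,\xi))$ with $f^*\tau_{11}$.

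Your plan is to show that the twisted sphere $\Sigma_{\xi,\xi}=A(\sigma(\xi,\xi))$ is the \emph{standard} $S^{17}$, embedded so that its normal bundle is $f^*\tau_{11}$. This cannot work: by Lemma~\ref{lem:Sigma}, $\Sigma_{\xi,\xi}$ represents $[\eta\eta_4]\in\coker(J_{17})$ and is therefore an \emph{exotic} $17$-sphere. Indeed, the whole proof of Theorem~\ref{thm:main} hinges on $\Sigma_{\xi,\xi}$ being exotic, since otherwise its normal bundle in $S^{28}$ would be trivial. So the geometric matching you propose is not available.

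The paper closes the gap differently and with much less effort. One does not analyze the embedding directly at all. Instead one invokes Antonelli's result that every homotopy $17$-sphere embeds in codimension $12$ with trivial normal bundle; hence $d_*(\sigma(\xi,\xi))=\nu(\Sigma_{\xi,\xi}\subset S^{28})$ lies in
\[
\ker\bigl(\pi_{17}BSO_{11}\to\pi_{17}BSO_{12}\bigr)=\im\bigl(\pi_{17}S^{11}\xrightarrow{\ \tau_{11}\ }\pi_{17}BSO_{11}\bigr).
\]
Since $\pi_{17}S^{11}\cong\Z/2$ and $d_*(\sigma(\xi,\xi))\neq 0$, the only possibility is $d_*(\sigma(\xi,\xi))=f^*\tau_{11}$. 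This is the missing step you need; it replaces your attempted direct identification with a one-line stabilization argument.
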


To the best of our knowledge, this is the first example of a non-trivial
vector bundle over a sphere
which is known to be trivial as a topological $\R^n$-bundle.  
Milnor famously gave examples
of non-trivial vector bundles over Moore spaces, 
for example the Moore space $M(\Z/7, 7) = S^7 \cup_7 D^8$, which are trivial as 
$\R^n$-bundles \cite[Lemma 9.1]{M}.  
These examples are stable bundles over $4$-connected
spaces and so the vector bundles are trivial as piecewise linear bundles too.

\subsection*{Acknowledgements}
We would like to thank Sand Kupers for helpful comments on an earlier draft.

\section{Proofs} \label{s:proofs}
In this section we give the proofs of Theorem \ref{thm:main} and Corollary \ref{cor:main}.
We first recall Gromoll's map $A = C \circ \lambda$ \cite{G},
\[A\colon \pi_{n-k}\Diff_\del(D^k) \xrightarrow{~\lambda~} \pi_0\Diff_\del(D^n) 
\xrightarrow{~C~} \Theta_{n+1},\]
where $\Theta_{n+1}$ is the group of homotopy $(n{+}1)$-spheres. The first map $\lambda$ includes fiberwise diffeomorphisms of $D^{n-k}\times D^k$ into all diffeomorphisms
and the second $C$ uses a diffeomorphism of $D^n \subset S^n$ as a datum to clutch 
two $(n{+}1)$-disks to a homotopy sphere.%
\footnote{The map $C$ is denoted $\Sigma$ in \cite{C-S}.}

\begin{lemma}\label{lem:embedding_of_homotopy_sphere}
For any $x\in \pi_{n-k}\Diff_\del(D^k)$, the homotopy sphere $A(x)\in \Theta_{n+1}$ admits an embedding into $\mathbb R^{n+k+1}$ whose normal bundle is classified (up to possible sign) by $d_*(x)\in \pi_n SO_k \cong \pi_{n+1} BSO_k$. 
\end{lemma}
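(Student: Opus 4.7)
My approach is to build the embedding by hand from the family of diffeomorphisms and then compute the normal bundle directly from the construction. I begin by representing $x$ by a smooth family $\{\phi_t\}_{t \in D^{n-k}}$ of boundary-fixing diffeomorphisms of $D^k$ that is trivial on $\del D^{n-k}$. Then $\Phi := \lambda(x) \in \Diff_\del(D^n)$ is the diffeomorphism $(t,y) \mapsto (t, \phi_t(y))$ of $D^n = D^{n-k} \times D^k$, and the clutching $A(x) = D^{n+1} \cup_{\widetilde\Phi} D^{n+1}$ admits the equivalent ``twisted sphere'' description
\[A(x) \;\cong\; (S^{n-k} \times D^{k+1}) \cup_f (D^{n-k+1} \times S^k), \qquad f(s,z) = (s,\widetilde{\phi}_s(z)),\]
where $\widetilde{\phi}_s\colon S^k \to S^k$ extends $\phi_s$ by the identity on the complementary hemisphere. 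This reformulation is the shape expected from the plumbing-pairing construction.

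Writing $\R^{n+k+1} = \R^{n-k+1} \times \R^{2k}$ and fixing a standard inclusion $\iota\colon D^{k+1} \hookrightarrow \R^{2k}$, I embed the first piece as $(s,y) \mapsto (s, \iota(y))$. For the second piece I seek a family of embeddings $\psi_t\colon S^k \hookrightarrow \R^{2k}$, parametrised by $t \in D^{n-k+1}$, whose restriction to $\del D^{n-k+1} = S^{n-k}$ is $s \mapsto \iota\lvert_{S^k} \circ \widetilde{\phi}_s$. The obstruction to this extension lies in $\pi_{n-k}(\mathrm{Emb}(S^k, \R^{2k}))$ and vanishes in the relevant range by Haefliger's unknotting theorem, using the $(k{-}1)$-dimensional ambient slack. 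Embedding the second piece by $(t,z) \mapsto (t, \psi_t(z))$ then matches the first piece along the gluing locus via $f$, producing the sought embedding $e\colon A(x) \hookrightarrow \R^{n+k+1}$.

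Finally, I compute the normal bundle of $e$, viewed as a rank-$k$ bundle on $A(x)$ and classified by $\pi_n SO_k$. Over the first piece the normal bundle splits as a trivial $\R^{k-1}$-summand plus a radial line and is canonically trivialised; over the second piece the normal directions are the orthogonal complement of $d\psi_t(TS^k)$ inside $\R^{2k}$. Comparing the two trivialisations along $S^{n-k} \times S^k$ extracts a clutching function; since $\psi_s = \iota \circ \widetilde{\phi}_s$ on $\del D^{n-k+1}$, the chain rule shows this clutching is expressed by $(s,z) \mapsto D_z \widetilde{\phi}_s \in SO_k$. Under the adjunction $\pi_{n-k}\Omega^k SO_k \cong \pi_n SO_k$ this is exactly $d_*(x)$, up to the sign explaining the caveat in the statement.

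The main obstacle is this last identification: one has to be careful about the orientation and framing conventions so that the clutching class extracted from the embedded normal bundle is $d_*(x)$ up to sign, rather than some related but different invariant. A secondary technical point is the existence of the extension $\psi_t$; although it follows from Haefliger's work, giving a self-contained construction (for instance by exploiting the extra $k{-}1$ ambient directions to homotope $\iota \circ \widetilde{\phi}_s$ to $\iota$) strengthens the geometric picture. One may also verify this indirectly via smoothing theory and Morlet's identification, which provides a cross-check.
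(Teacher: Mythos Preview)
Your strategy is the same as the paper's first (explicit geometric) proof: build the embedding by hand from the family and read off the normal bundle as a clutching function.  Where you diverge is that you do not actually write the embedding down, and this creates two genuine gaps.

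\textbf{The extension step.}  You need to extend the family $s\mapsto \iota\circ\widetilde\phi_s$ from $S^{n-k}$ to $D^{n-k+1}$ inside $\mathrm{Emb}(S^k,\R^{2k})$, and you appeal to Haefliger's unknotting theorem.  But Haefliger's theorem is a statement about $\pi_0$ of the embedding space, not about $\pi_{n-k}$, and the lemma carries no dimension restriction on $n$ and $k$.  The paper circumvents this entirely by writing an explicit formula: the map
\[
\iota_\psi\colon D^{n-k}\times D^k\times [0,1]\to D^{n-k}\times D^k\times D^k\times[0,1],\qquad
(x,y,t)\mapsto (x,\alpha(t)y,\beta(t)\psi_x(y),t),
\]
with suitable bump functions $\alpha,\beta$, is visibly an embedding which interpolates between the standard embedding and the $\psi$-twisted one using the extra $D^k$ factor.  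This \emph{is} the explicit null-homotopy you allude to at the end, but it has to be produced, not just invoked; Haefliger does not give it.

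\textbf{The clutching computation.}  Even granting an extension $\psi_t$, the trivialisation of the normal bundle over your second piece $D^{n-k+1}\times S^k$ depends on the behaviour of $\psi_t$ in the interior of $D^{n-k+1}$, not only on its boundary values $\psi_s=\iota\circ\widetilde\phi_s$.  Hence the clutching function you extract is not pinned down until that choice is made, and your chain-rule sentence uses only the boundary data.  In the paper this issue does not arise because the embedding is given by a formula: one computes the Jacobian $D\iota_\psi$ as a $4\times 3$ block matrix, writes down an explicit fibrewise complement, and sees directly that the two trivialisations (for $\psi$ and for $\id$) differ exactly by $d\psi$.

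In short, your outline becomes a proof once the Haefliger appeal is replaced by an explicit formula for the embedding; that formula then also makes the normal-bundle computation honest.  This is precisely what the paper's first proof does.  (The paper also records a second, independent proof via smoothing theory and Rourke--Sanderson block bundles, along the lines of the Morlet cross-check you mention.)
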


We will offer
two proofs of this result; one by an explicit geometric construction and a more abstract one by the classification of smoothings through Rourke-Sanderson's theory of block bundles.

Next we recall that Milnor constructed exotic spheres by plumbing linear disk bundles and taking the boundary sphere; this construction gives rise to a pairing
\[\sigma_M\colon \pi_p SO_q \otimes \pi_q SO_p \to \Theta_{p+q+1}.\]
By \cite[Proposition 3.1]{A-B-K}, the pairing of Antonelli, Burghelea and Kahn  refines this pairing in the sense that the following diagram is commutative
\begin{equation}\label{eq:commsquare}\xymatrix{
 \pi_p SO_{q-a} \otimes \pi_q SO_{p-b} \ar[rr]^(0.475){\sigma} \ar[d]
   && \pi_{a+b+1}\Diff_\del(D^{p+q-a-b-1})\ar[d]^A
 \\
 \pi_p SO_q \otimes \pi_q SO_p \ar[rr]^(0.55){\sigma_M} 
   && \Theta_{p+q+1},
}\end{equation}
where the map on the left is the tensor product of the canonical stabilisations.
We now consider the homotopy $17$-sphere 
$$\Sigma_{\xi, \xi} := A(\sigma(\xi,\xi)),$$
recalling that $\xi \in \pi_8SO_6 \cong \Z/24$ denotes a generator.
By the commutativity of \eqref{eq:commsquare} and the definition of $\sigma_M$, $\Sigma_{\xi, \xi}$ is the boundary of an $8$-connected compact $18$-manifold and so
by a recent result of Burklund and Senger \cite[Theorem 1.4]{B-S} its image under the 
normal invariant map $\Theta_{17}\to \coker(J_{17})$ must be either $0$ or $[\eta\eta_4]$. 
We will show:

\begin{lemma}\label{lem:Sigma}
The homotopy sphere $\Sigma_{\xi, \xi}$ represents $[\eta\eta_4]\in \coker(J_{17})$.
\end{lemma}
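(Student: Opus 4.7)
By the commutativity of diagram \eqref{eq:commsquare} and the definition of $\sigma_M$, one has $\Sigma_{\xi,\xi} = \sigma_M(\bar\xi, \bar\xi) \in \Theta_{17}$, where $\bar\xi \in \pi_8SO_8$ is the stabilization of $\xi$ under the canonical map $\pi_8SO_6 \to \pi_8SO_8$. Thus $\Sigma_{\xi,\xi}$ bounds the Milnor plumbing $W^{18}$ of two copies of the rank-$8$ linear disk bundle over $S^9$ classified by $\bar\xi$. Since by \cite[Theorem 1.4]{B-S} the image of $\Sigma_{\xi,\xi}$ in $\coker(J_{17})$ is forced to be either $0$ or $[\eta\eta_4]$, it suffices to rule out the trivial case.

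The plan is to compute the Kervaire--Milnor framed bordism class of $\Sigma_{\xi,\xi}$ explicitly. First I would identify $J(\bar\xi) \in \pi_8^s$: since $\xi$ generates $\pi_8SO_6 \cong \Z/24$, its further stabilization to $\pi_8SO \cong \Z/2$ is non-zero, and therefore $J(\bar\xi)$ is the specific non-trivial element of $\im J_8 \subset \pi_8^s$. Next, I would invoke the standard description of the framed bordism class of a Milnor plumbing sphere in terms of the stable $J$-images of the defining bundles. For the symmetric plumbing $\sigma_M(\bar\xi,\bar\xi)$ this should produce an explicit element of $\pi_{17}^s$ modulo $\im J$ constructed from $J(\bar\xi)$ and $\eta$ (of the form of a composition product or of a Toda bracket $\langle J(\bar\xi),\eta,J(\bar\xi)\rangle$, after checking the appropriate vanishing of products with $\eta$). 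The final computational step is to recognize the resulting stable class as $[\eta\eta_4]$.

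The principal obstacle is making the transition from the geometric plumbing to an explicit stable homotopy formula precise, and then carrying out the calculation in $\pi_{17}^s$, a group where multiple $\eta$-related classes coexist. Should the direct calculation prove unwieldy, an alternative I would pursue is to prove merely the weaker assertion $\Sigma_{\xi,\xi}\notin bP_{18}$ by means of a secondary invariant distinguishing $\Sigma_{\xi,\xi}$ from the Kervaire sphere (the generator of $bP_{18}\cong \Z/2$): for example, via an Adams-spectral-sequence detection of an $h_1h_4$-type permanent cycle on the stable homotopy type of $W/\partial W$, or through a Pontryagin-number-style secondary characteristic class on $W$. Either way, the Burklund--Senger dichotomy immediately forces the non-zero image to equal $[\eta\eta_4]$, yielding the lemma.
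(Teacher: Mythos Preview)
Your proposal is not a proof but a plan, and the plan has two genuine gaps.

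First, the sentence ``since $\xi$ generates $\pi_8SO_6 \cong \Z/24$, its further stabilization to $\pi_8SO \cong \Z/2$ is non-zero'' is an assertion, not an argument. A surjection $\Z/24 \to \Z/2$ is certainly possible, but so is the zero map; nothing you have said rules it out. In the paper this is exactly the content of Lemma~\ref{lem:xi}, and its proof is not a one-liner: it requires chasing through the unstable groups $\pi_8SO_j$ for $j = 6,7,8$, invoking Adams' vector-fields-on-spheres theorem and a specific element $(2\gamma)_7\eta_7$ to separate two order-$2$ subgroups of $\pi_8SO_7$.

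Second, and more seriously, your main step is never carried out. You write that the framed bordism class of a symmetric Milnor plumbing ``should'' be given by some composition or Toda bracket built from $J(\bar\xi)$ and $\eta$, and then you would ``recognize'' this as $[\eta\eta_4]$. But there is no standard closed formula of this kind for the normal invariant of a plumbing sphere, and you yourself flag the passage from geometry to such a formula as the principal obstacle without resolving it. The fallback suggestions (Adams spectral sequence detection on $W/\partial W$, or an unspecified secondary characteristic class) are equally schematic.

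The paper sidesteps all of this by working on the bounding manifold rather than the boundary sphere. Wall's classification gives $A_{18}^8 \cong \Z/2 \oplus \Z/2$ via the Arf invariant $\Phi(\varphi_W)$ and a second $\Z/2$-valued invariant $\varphi_W(\chi_W)$, both read off directly from the quadratic refinement of the intersection form. For the plumbing $W$ of $S^3(\xi)$ with itself one computes $\varphi_W(\chi_W) = 1$ by hand (this is where Lemma~\ref{lem:xi} enters, to identify $\chi_W = x+y$). Burklund--Senger then forces $\partial\colon A_{18}^8 \to \Theta_{17}$ to be injective, and since every $bP$-sphere bounds a manifold $P$ with $\varphi_P(\chi_P)=0$, the class $[W]$ cannot map to a $bP$-sphere. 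The dichotomy finishes the argument. This route never touches $\pi_{17}^s$ directly and requires no Toda-bracket manipulations.
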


We deduce from this that every embedding $\Sigma_{\xi, \xi} \hookrightarrow S^{28}$
has a non-trivial normal bundle. Indeed, recall that the map $\Theta_{17}\to \coker(J_{17})$ is obtained by embedding a
homotopy $17$-sphere into some euclidean space with trivial normal bundle, and
performing the Pontryagin-Thom collapse as to obtain an element in
$\pi_{17}^s$ which is well-defined modulo the image of $J$. Now, assuming by
contradiction that $\Sigma_{\xi, \xi}$ embeds into $S^{28}$
with trivial normal bundle, then $[\eta\eta_4]$ would have a
representative in $\pi_{28}S^{11}$. However, this contradicts the computations of Toda
\cite[Theorem 2.17 \& Proposition 2.20]{T} on the stabilisation map
$\pi_{28}S^{11} \to \pi_{17}^s$, which we display below:
%
%
\[ 
\begin{array}{ccccccccccc}
\pi_{28}S^{11} & = &
\Z/2((\eta^2 \rho)_{11}) & \oplus & \Z/2((\mu_{17})_{11})
& \oplus & \Z/2((\nu \kappa)_{11}) \\
\downarrow & & \downarrow  & & \downarrow  & & \downarrow  \\
\pi_{17}^s & = & \Z/2(\eta^2 \rho) & \oplus & \Z/2(\mu_{17})
& \oplus & \Z/2(\nu \kappa) & \oplus & \Z/2(\eta \eta_4)
\end{array}
\]
Here the notation is such that an element $(\delta)_{11}$ stabilises to $\delta$ and
the stable class $\eta^2 \rho$ generates $\im(J_{17} \colon \pi_{17}(SO) \to \pi_{17}^s)$.
Lemma \ref{lem:embedding_of_homotopy_sphere} then implies that $d_*(\sigma(\xi, \xi)) \neq 0$, which concludes the proof of Theorem \ref{thm:main}, modulo the Lemmas \ref{lem:embedding_of_homotopy_sphere} and \ref{lem:Sigma}. 

To see Corollary \ref{cor:main}, we recall that the normal bundle $\nu(\Sigma_{\xi, \xi}\subset S^{28})=d_*(\sigma(\xi, \xi))\in \pi_{16}SO_{11}$ is non-zero but maps to $0\in \pi_{16}SPL_{11}$. On the other hand, by Antonelli \cite{A}, the normal bundle of every homotopy $17$-sphere embedded in euclidean space in codimension $12$ is zero.  Hence
\[\nu(\Sigma_{\xi, \xi} \subset S^{28})\in \ker(\pi_{17}BSO_{11}\to \pi_{17}BSO_{12}) = 
\im(\pi_{17}S^{11} \to \pi_{17}BSO_{11}),\]
where the last map is induced by the classifying map of the tangent bundle of the $11$-sphere.

It remains to prove the Lemmas \ref{lem:embedding_of_homotopy_sphere} and \ref{lem:Sigma}.

\begin{proof}[Proof of Lemma \ref{lem:embedding_of_homotopy_sphere}]
Choose a smooth map $\psi\colon D^{n-k}\times D^k\to D^k$
  representing the class $[\psi]\in \pi_{n-k}(\Diff_\partial(D^k))$  (i.e.~for $x\in
  D^{n-k}$ we have that $\psi_x:=\psi(x,-)\in \Diff_\partial(D^k)$, and $\psi_x=\id_{D^k}$ for $x\in\partial
  D^{n-k}$). Then $\lambda([\psi])$ is represented by 
  \[\Psi\colon D^k\times D^{n-k}\to D^k \times D^{n-k}, \quad (x,y)\mapsto (x,\psi(x,y))\]
  and $A([\psi])$ is represented by the homotopy sphere $\Sigma_\Psi^{n+1}$ obtained by glueing two copies of $D^{n+1}$ along the boundary using
  the diffeomorphism $\Psi$. Note also that the image of $[\psi]$ under the derivative map is represented by $d\psi\colon D^{n-k}\times D^k\to Gl_k(\reals)$ with $d\psi(x,y) =
  D_y\psi_x$.

  For technical reasons, we actually assume without loss of generality that
  the maps are the identity maps in a neighborhood of the boundaries.

  We construct an explicit embedding $\iota_\psi\colon \Sigma^{n+1}_\Psi\hookrightarrow S^{n+k+1}$ of $\Sigma^{n+1}_\Psi$, compute the normal bundle of this embedding,
  and show explicitly that it is obtained by clutching with $d\psi$.

As might be expected, given that all our data is on disks (and trivial near
the boundary of the disks), we actually produce an interesting embedding
$\iota_\psi$ of
$D^{n+1}=D^{n-k}\times D^k\times [0,1]$ into $D^{n+k+1}=D^{n-k}\times
D^k\times D^k\times [0,1]$ which has standard form
near the boundary, and then obtain an embedding of $\Sigma^{n+1}_\Psi$
by glueing with a standard embedding of $D^{n+1}$ into $D^{n+k+1}$ in the
appropriate way.

The desired embedding $\iota_\psi$ is explicitly given by
\begin{equation*}
  \iota_\psi\colon D^{n-k}\times D^k\times [0,1] \to D^{n-k}\times D^k\times
  D^k\times [0,1], \quad (x,y,t)\mapsto (x, \alpha(t) y, \beta(t) \psi_x(y), t);
\end{equation*}
here, $\alpha,\beta\colon [0,1]\to [0,1]$ are smooth maps such that
$\alpha(t)=1$ for $t<0.6$ and $\alpha(t)=0$ for $t>0.9$, and
$\beta(t)=\alpha(1-t)$.

This is evidently a smooth embedding whose image we denote by $S_\psi$, and we let $\del S_\psi=\iota_\psi(\boundary D^{n+1})$. Then $\del S_\psi\subset \boundary D^{n+k+1}$: To see this, observe that if either the
$x$ or the $t$-coordinate is in the boundary, then the first or forth
coordinate of the image point is so, too. For each $t\in [0,1]$, then
$\alpha(t)=1$ or $\beta(t)=1$. If $y\in\boundary D^k$, then therefore
either the second or the third component of the image point is in the boundary
(or both). As $\boundary (D^{n-k}\times D^k\times [0,1])$ is the union of those
points with at least one component in the boundary, this proves the claim.

We also note that the subset $\boundary S_\psi\subset \boundary D^{n+1}$ is in fact independent of $\psi$ (as $\psi$ is fixed to be the identity
map near the boundary).
Let us identify this image set $\boundary S_\psi$
with $S^{n}=\boundary(D^{n-k}\times D^k\times [0,1])$ via the restriction of
$\iota_{\id}$ to $\boundary(D^{n-k}\times D^k\times [0,1])$.

Then $\iota_\psi|\colon \boundary(D^{n-k}\times D^k\times [0,1])\to
\boundary(D^{n-k}\times D^k\times [0,1])$ is supported on the disk
$D^{n-k}\times D^k\times \{1\}$, where it is given by $\Psi$. Therefore, we can glue two copies of $D^{n-k}\times D^k\times D^k\times
[0,1]$ along the boundary by the identity map to obtain $S^{n+k+1}$, and the
embeddings $\iota_\psi$ in one copy and $\iota_\id$ in the other glue together
to form the desired embedding of $\Sigma^{n+1}_\Psi$ into $S^{n+k+1}$.

Strictly speaking, one has to round the corners off to get an actual smooth
embedding. This can easily be achieved, as $\psi$ is the identity in a
neighborhood of the boundaries. We omit spelling out the somewhat cumbersome
details.

It remains to compute the normal bundle of the embedding. To do this, we
first compute the 
differential 
\[D\iota_\psi\colon (D^{n-k}\times D^k\times [0,1])\times
(\reals^{n-k}\oplus \reals^k\oplus \reals) \to S_\psi\times \reals^{n-k}\oplus
\reals^k\oplus\reals^k\oplus\reals\]
to be given in each fiber by 
\begin{equation*}
  D_{(x,y,t)}\iota_\psi =
  \begin{pmatrix}
    1 &  0 & 0\\
    0 & \alpha(t) & \alpha'(t) y\\
    \beta(t)\partial_x \psi & \beta(t) d\psi &\beta'(t)\psi_x(y)\\
    0 &0 & 1
  \end{pmatrix}.
\end{equation*}
We obtain an explicit trivialization of the normal bundle of this embedding
via the fiberwise linear map
\begin{equation*}
  \nu \colon D^{n-k}\times D^k\times [0,1] \times \reals^k \to S_\psi\times
 ( \reals^{n-k}\oplus \reals^k\oplus \reals^k\oplus \reals)
\end{equation*}
covering $\iota_\psi$ and described by the matrix
\[\nu_{(x,y,t)} = 
  \begin{pmatrix}
   0 \\ -\beta(t) (d\psi)^{-1}  \\ \alpha(t) \\ 0
 \end{pmatrix}.
\]
To observe that this really describes the normal bundle, for dimension reasons
we just have to check
that the image of $\nu$ intersects the tangent bundle of $S$, i.e.~the image
of $D\psi$, trivially. It is clear that $\nu(v)$ can only be equal to a
tangent vector of the form $(0,\alpha(t)w,\beta(t)d\psi(w),0)$ for
$w\in\reals^k$. This implies $\alpha(t)v=\beta(t)d\psi(w)$ and
$-\beta(t)v=\alpha(t)d\psi(w)$; two equations imply $\alpha(t)^2v=-\beta(t)^2v$ and
finally (as $\alpha(t)^2+\beta(t)^2>0$) then $v=0$ and then also
$w=0$.  It follows that the image 
of $\nu$ represents the normal bundle of $S_\psi$ in $D^{n+k+1}$.

For the other half disk which produces the embedding of $\Sigma_\Psi^{n+1}$
into $S^{n+k+1}$, we obtain a trivialization of the normal bundle by the
same recipe, replacing $\psi$ by $\id$. We observe then that we obtain the
global normal bundle by glueing these two explicitly chosen normal subbundles
of $TD^{n+k+1}$ along the boundary, where they coincide. The trivializations differ precisely on the half disk $\iota(D^{n-k}\times D^k\times
\{1\})$, and there they differ by the derivative map $d\psi$. On the other
half disk, the two trivializations coincide.

Consequently, the normal bundle of the embedding $\iota_\psi$  is obtained by
clutching with $d\psi$, precisely as claimed, and the lemma is proved.  
\end{proof}

\begin{remark}
  It is tempting to hope that the explicit geometric construction of $d_*$ as the
  normal bundle of the embedding $\iota_\psi$ can be used to get some new information about
  $d_*$. On the other hand, the information obtained by the formulas in the
  proof given above seems rather limited.  
  At least in the case where $\psi$ lies in the image of $\sigma$, we present Conjecture \ref{conj:1}
  below on $d_* \circ \sigma$.
\end{remark}

\begin{proof}[Proof of Lemma \ref{lem:Sigma}]
Let $A_{18}^8$ denote the group of rel.\ boundary bordism classes of $8$-connected
$18$-manifolds with boundary a homotopy sphere. 
According to \cite[\S17]{W2} and \cite[Theorem 2 (5)]{W1}, we have an isomorphism 
\begin{equation}\label{eq:computationWall}
 A_{18}^8 \to \Z/2 \oplus \Z/2,
\quad [W] \mapsto (\Phi(\varphi_W), \varphi_W(\chi_W)). 
\end{equation}
Here $\varphi_W\colon H_9(W)\to \Z/2$ is a quadratic refinement of the intersection form, obtained by representing a homology class by an embedded sphere and taking its normal bundle:
This gives rise to a quadratic map 
$$\alpha_W\colon H_9(W)\to \pi_8SO_9,$$ 
where the stabilisation map $S \colon \pi_8SO_9 \to \pi_8SO = \Z/2$ is split
with kernel $\Z/2$;
from $\alpha_W$ we obtain a quadratic map $\varphi_W$ 
with values in  $\Z/2=\ker(S)$ through fixing a splitting of $\pi_8SO_9$.
The first component of \eqref{eq:computationWall} is then the Arf invariant of 
$\varphi_W\colon H_9(W)\to \Z/2$, and the second component is the value 
of $\varphi_W$ on any integral lift of the element 
\[\chi_W \in H_9(W; \pi_8SO)\cong H_9(W; \Z/2)\]
which is the Poincar\'e dual of the element $S\alpha_W \in \mathrm{Hom}(H_9(W), \Z/2) = H^9(W; \Z/2)$.

Let $S^3(\xi) \in \pi_8SO_9$ be the image of $\xi \in \pi_8SO_6$ under the inclusion $SO_6 \to SO_9$.
By the commutativity of \eqref{eq:commsquare}, $\Sigma_{\xi,\xi}$ is the boundary of the Milnor plumbing $W$ of $S^3(\xi) \in \pi_8SO_9$ with itself, and we compute 
$\varphi_W(\chi_W)$ as follows:
With $H_9(W)=\Z(x)\oplus \Z(y)$, the normal bundles obtained from representing $x$ and $y$ by embeddings are both given by $S^3(\xi)$, so we conclude 
$\varphi_W(x)=\varphi_W(y)$.
Furthermore, $S^3(\xi)$ stabilises to a generator of $\pi_8SO$, by Lemma \ref{lem:xi} below.
Thus, $S \alpha_W$ maps both $x$ and $y$ to a generator and so $\chi_W = x+y$.
We now compute that 
$$ \varphi_W(\chi_W) = \varphi_W(x + y) = \varphi_W(x) + \varphi_W(y) + \rho_2(\lambda(x, y)) = 1,$$
where $\rho_2(\lambda)$ denotes the mod 2 reduction of the intersection form. 

Now, taking the homotopy sphere
on the boundary defines a homomorphism
\begin{equation}\label{eq:del} \del \colon A_{18}^8 \to \Theta_{17}.\end{equation}
From the short exact sequence
\[0 \to bP_{18}(=\Z/2) \to \Theta_{17}\to \coker(J_{17})\to 0\]
and \cite[Theorem 1.4]{B-S} we see that the image of the map $\del$ from \eqref{eq:del} consists of precisely 4 different elements, so the map $\del$ is injective. Each of the $bP$-spheres is the boundary of a manifold $P$ which satisfies $S\alpha_P=0$ and therefore $\varphi_P(\chi_P)=0$, and it follows that the element $W$ from above must map, under $\del$, to  a non-$bP$-sphere, which then represents $[\eta\eta_4]$ in view of \cite[Theorem 1.4]{B-S}.
\end{proof}

\begin{lemma}\label{lem:xi}
The map $\Z/24\cong \pi_8SO_6\to \pi_8SO\cong \Z/2$ is surjective.
\end{lemma}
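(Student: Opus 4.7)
The plan is to use the exceptional isomorphism $\Spin(6) \cong SU(4)$, which induces isomorphisms on all $\pi_k$ for $k \geq 2$ and so identifies $\pi_8 SO_6 \cong \pi_8 SU(4) \cong \Z/24$. The latter group can be read off from the long exact sequence of the principal bundle $SU(3) \to SU(4) \to S^7$:
\[
\pi_8 SU(3) = \Z/12 \to \pi_8 SU(4) \to \pi_8 S^7 = \Z/2 \to \pi_7 SU(3) = 0,
\]
yielding the non-split extension $0 \to \Z/12 \to \pi_8 SU(4) \to \Z/2 \to 0$ consistent with $\pi_8 SU(4) = \Z/24$. In particular, a generator $\xi$ projects onto the Hopf element $\eta \in \pi_8 S^7$.

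To detect nontriviality of the image of $\xi$ in $\pi_8 SO \cong \Z/2$, the plan is to use the $J$-homomorphism. The stable map $J\colon \pi_8 SO \to \pi_8^s \cong \Z/2 \oplus \Z/2$ is injective with image the $\mathrm{Im}(J)_8$ summand, so it suffices to show that the composition $\pi_8 SO_6 \to \pi_8 SO \xrightarrow{J} \pi_8^s$ does not vanish on $\xi$. A concrete way to compute $J(\xi)$ is the Pontrjagin--Thom construction applied to a framed representative of $\xi$, for example using the explicit embedding $\iota_\xi$ from Lemma~\ref{lem:embedding_of_homotopy_sphere} together with a stable framing of the ambient sphere.

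An alternative, more hands-on route is to climb the stabilisation tower $SO_6 \to SO_7 \to SO_8 \to SO_9 \to SO$ step by step using the long exact sequence of each fibration $SO_n \to SO_{n+1} \to S^n$. The connecting map $\pi_9 S^n \to \pi_8 SO_n$ is given by composition with the tangent bundle classifier $[TS^n]$. Since $\pi_8 S^n = 0$ for $n \geq 9$, the stabilisation is automatically surjective at the top of the tower; the delicate cases $n = 6, 7, 8$ can be handled by consulting Kervaire's tables of unstable $\pi_8$ for low-rank classical Lie groups, verifying at each step that the generator is not annihilated.

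The main obstacle is the numerical verification of nonvanishing in $\pi_8 SO$. Both routes ultimately reduce to a specific calculation against classical data, either an explicit $e$-invariant / $J$-homomorphism computation on a framed representative, or a direct appeal to tabulated unstable homotopy groups.
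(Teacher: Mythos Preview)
Your proposal is a plan rather than a proof, and the plan leaves the one nontrivial step unaddressed. You yourself identify ``the numerical verification of nonvanishing in $\pi_8 SO$'' as the main obstacle, and then stop. Everything before that point---the identification $\pi_8 SO_6 \cong \pi_8 SU(4) \cong \Z/24$ via $\Spin(6)\cong SU(4)$, and the observation that a generator surjects onto $\eta\in\pi_8 S^7$ under $SU(4)\to S^7$---is correct and pleasant, but it does not bear on the stabilisation map $SO_6\to SO$: the projection $SU(4)\to S^7$ has no evident relation to that map, so knowing the image of $\xi$ there gives you nothing about its image in $\pi_8 SO$.

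Your first proposed route, via the $J$-homomorphism, is in principle sound (since $J\colon \pi_8 SO\to \pi_8^s$ is injective), but your concrete suggestion to ``use the explicit embedding $\iota_\xi$ from Lemma~\ref{lem:embedding_of_homotopy_sphere}'' is a non sequitur: that lemma takes as input an element of $\pi_{n-k}\Diff_\partial(D^k)$ and produces an embedding of the associated homotopy sphere; it has nothing to do with computing $J$ on an element of $\pi_8 SO_6$. You would need an independent calculation of $J_{8,6}(\xi)\in\pi_{14}S^6$ and its stabilisation, which you do not supply.

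Your second route, climbing the tower $SO_6\to SO_7\to SO_8\to SO$, is exactly what the paper does, but the paper's proof makes clear that this is not merely ``consulting Kervaire's tables'': at each of the two critical stages one must show that the image of the smaller group is \emph{not} contained in the kernel of stabilisation, and both subgroups have the same order, so a counting argument alone fails. The paper exhibits explicit witnesses---the class of $TS^9$ in $\pi_8 SO_8$ (in the kernel to $SO$ but not in the image from $SO_7$, by Adams' vector-field theorem), and the element $(2\gamma)_7\circ\eta_7$ in $\pi_8 SO_7$ (in the kernel to $SO$ but not lifting to $SO_6$, via a diagram chase). These two constructions are the entire content of the lemma; your proposal contains neither.
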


\begin{proof}
By \cite[Theorem 1.4]{L}, $(\Z/2)^3 \cong \pi_8SO_8\to \pi_8SO\cong \Z/2$ is
onto and therefore has a kernel of 4 elements. (We refer to \cite{Ke} for the
computation of the relevant homotopy groups.) On the other hand,
$(\Z/2)^2\cong \pi_8SO_7\to \pi_8SO_8$ is injective (its cokernel injects into
$\pi_8(S^7)\cong\Z/2$) and so has an image of 4 elements. These two subgroups
do not coincide: Since the maximal number of pointwise
  linearly independent vector fields on $S^9$ is $1$ \cite[Theorem 1.1]{Adams}, the tangent bundle of $S^9$ defines an element in $\pi_8SO_8$ that is not in the image of $\pi_8SO_7$ but maps to $0\in \pi_8SO$. 

Therefore, $(\Z/2)^2\cong \pi_8SO_7\to \pi_8SO$ is surjective and has a kernel
of precisely two elements; similarly the image of $\Z/24\cong\pi_8SO_6\to
\pi_8SO_7\cong (\Z/2)^2$ consists of precisely two elements (its cokernel
injects into $\pi_8S^6 \cong \Z/2$), and we are left to show that these two subgroups do not agree. To see this, we consider the element  $a:=(2\gamma)_7\eta_7$ where $(2\gamma)_7$ is a generator of $\Z\cong \pi_7SO_7$ and $\eta_7\colon S^8\to S^7$ is the nontrivial class: By \cite[Theorem 1.4]{L}, $(2\gamma)_7$ stabilizes to an element divisible by 2 and so $a$ is in the kernel of the stabilization; and it does not lift to $\pi_8SO_6$ by the commutativity of the following diagram with  exact rows:
\[\xymatrix{
 & \pi_7SO_7 \ar[r] \ar[d]^{\eta_7} & \pi_7S^6 \ar[r] \ar[d]^{\eta_7}_\cong & \pi_6SO_6\;(=0)
 \\
 \pi_8SO_6 \ar[r] &\pi_8SO_7 \ar[r] & \pi_8S^6
}\]
\end{proof}

We conclude this section by giving the promised second proof of Lemma \ref{lem:embedding_of_homotopy_sphere}. To this end we recall from \cite[\S 6]{R-S1} that a smoothing of $S^{n+1}$ in $S^{n+k+1}$ consists of a smooth manifold $W$ and a PL homeomorphism $H\colon W\to S^{n+k+1}$, such that $\Sigma:=H^{-1}(S^{n+1})\subset W$ is a smooth submanifold, and such that $H$ is concordant to the identity smoothing of $S^{n+k+1}$; and recall the group $\GRS^k_{n+1}$ of concordance classes of such smoothings.%
\footnote{The group $\GRS^k_{n+1}$ is denoted $\Gamma^{k}_{n+1}$ in \cite{R-S1}.  We have
used different notation, to avoid confusion with the notation
$\Gamma^{n+1}_k$ for the subgroups of the Gromoll filtration. }
We note that up to diffeomorphism, $W$ is a standard sphere mapping to $S^{n+k+1}$ by a PL homeomorphism concordant to the identity, so that elements of $\GRS^k_{n+1}$ are represented by PL homeomorphisms $H\colon S^{n+k+1}\to S^{n+k+1}$ which are concordant to the identity (i.e., orientation preserving). Note also that $\Sigma$ is a homotopy $(n{+}1)$-sphere, oriented through the PL homeomorphism $h:=H\vert_{\Sigma}$, which is smoothly embedded into $S^{n+k+1}$. 

There are two obvious homomorphisms out of $\GRS^k_{n+1}$, 
\[ \Theta_{n+1} \xleftarrow{~F~} \GRS^k_{n+1} \xrightarrow{~\nu~} \pi_{n+1}BSO_k,\]
the left one mapping the class of $H$ to the diffeomorphism class of $\Sigma$ and the right one to the classifying map of the normal bundle of $\Sigma\subset S^{n+k+1}$
(where, as usual, we identify a homotopy sphere up to homotopy equivalence with a standard sphere using the given orientation).
Then, Lemma \ref{lem:embedding_of_homotopy_sphere} is clearly implied by the following result:

\begin{lemma}\label{lem:d_and_nu}
There exists a group homomorphism $B\colon \pi_{n-k}\Diff_\del(D^k)\to \GRS^k_{n+1}$ such that the following diagram commutes up to possible signs:
\[\xymatrix{
 &\pi_{n-k}\Diff_\del(D^k) \ar[r]^(0.6){d_*} \ar[d]^{B} \ar[ld]_A & \pi_n SO_k \ar[d]^\cong\\
 \Theta_{n+1} &\GRS^k_{n+1} \ar[l]_(0.45)F \ar[r]^(0.425)\nu & \pi_{n+1} BSO_k
}\]
\end{lemma}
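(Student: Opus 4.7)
The plan is to define $B$ by combining Morlet's smoothing-theoretic isomorphism with Rourke--Sanderson's classification of codimension-$k$ smoothings, and then to verify the two commutativities at the level of classifying spaces.

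First, Rourke--Sanderson's classification yields a natural isomorphism $\GRS^k_{n+1} \cong \pi_{n+1}(PL_k/O_k)$. Under this identification, $F$ is induced by the stabilisation $PL_k/O_k \to PL/O$ followed by the standard smoothing-theoretic identification $\pi_{n+1}(PL/O) \cong \Theta_{n+1}$, while $\nu$ is induced by the connecting map $PL_k/O_k \to BO_k$ of the fibration sequence $O_k \to PL_k \to PL_k/O_k$.

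Second, as already recalled in the introduction, Morlet's theorem provides an isomorphism
\[\alpha \colon \pi_{n-k}\Diff_\del(D^k) \xrightarrow{\cong} \pi_{n+1}(PL_k/O_k)\]
under which $d_*$ corresponds to the map induced on $\pi_{n+1}$ by the connecting map $PL_k/O_k \to BO_k$. I define $B$ to be $\alpha$ composed with the inverse of the Rourke--Sanderson isomorphism; then $\nu \circ B = \pm d_*$ is immediate from the two descriptions above.

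It remains to verify $F \circ B = A$. Decomposing $A = C \circ \lambda$, the plan is to argue that, under Morlet's isomorphism in dimensions $k$ and $n$, the stabilisation $\lambda_*$ corresponds to the stabilisation map $\pi_{n+1}(PL_k/O_k) \to \pi_{n+1}(PL_n/O_n)$, and that the clutching homomorphism $C \colon \pi_0 \Diff_\del(D^n) \to \Theta_{n+1}$ corresponds (up to sign) to the composite $\pi_{n+1}(PL_n/O_n) \to \pi_{n+1}(PL/O) \cong \Theta_{n+1}$; the latter is a standard fact of Hirsch--Mazur--Munkres smoothing theory. Composing then gives the required identity. The main obstacle is the careful verification of the naturality of Morlet's isomorphism with respect to the stabilisation $\Diff_\del(D^k) \to \Diff_\del(D^n)$ that extends by the identity on $D^{n-k}$: this is implicit in, but not fully spelled out in, the classical sources, and requires some unpacking. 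Once this naturality is in hand, the rest is a formal diagram chase.
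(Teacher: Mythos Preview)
Your overall strategy---define $B$ via Morlet's isomorphism combined with the Rourke--Sanderson classification, then verify the two triangles---is exactly what the paper does. However, there are two points where your write-up glosses over genuine content.

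First, the Rourke--Sanderson isomorphism \cite[Corollary~6.7]{R-S1} identifies $\GRS^k_{n+1}$ with $\pi_{n+1}(\widetilde{PL}_k/O_k)$, where $\widetilde{PL}_k$ is the \emph{block} version, not with $\pi_{n+1}(PL_k/O_k)$. Morlet's isomorphism, on the other hand, lands in $\pi_{n+1}(PL_k/O_k)$. So your map $B$ must in fact be Morlet followed by the comparison map $\tilde i\colon \pi_{n+1}(PL_k/O_k)\to \pi_{n+1}(\widetilde{PL}_k/O_k)$ followed by the inverse of Rourke--Sanderson. This is harmless for the right-hand square (both connecting maps to $\pi_nSO_k$ are compatible), but it matters for the left-hand triangle: you need to know that stably $PL\to\widetilde{PL}$ is an equivalence \cite[Corollary~5.5(ii)]{R-S3} to compare the two stabilisation maps.

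Second, and more seriously, you assert without argument that under the Rourke--Sanderson isomorphism the map $F$ becomes the stabilisation $\widetilde{PL}_k/O_k\to PL/O$. This is not a definition-unwinding: the Rourke--Sanderson isomorphism sends a smoothing $(H,h)$ to the \emph{normal} block bundle of $\cyl(h)\subset\cyl(H)$ with its linear reduction at one end, whereas the smoothing-theory isomorphism $\Theta_{n+1}\cong\pi_{n+1}(PL/O)$ is defined via the \emph{tangent} microbundle of $\cyl(h)$. The paper's proof establishes the compatibility by showing that the sum of the two composites vanishes, using $T\cyl(h)\oplus\nu_{\cyl}\cong T\cyl(H)|_{\cyl(h)}$ together with the fact that $H$ is PL isotopic to the identity. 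This step is the real content of the lower-left square and cannot be skipped.

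Finally, the naturality statements you flag as ``implicit'' (Morlet commutes with $\lambda$, and $C$ matches the smoothing-theory isomorphism) are written down as \cite[Lemmas~2.5 and~2.7]{C-S}; you can cite those rather than unpack them yourself.
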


\begin{proof}[Proof of Lemma \ref{lem:d_and_nu}]
We consider the following diagram:
\begin{equation}\label{eq:big_diagram}\xymatrix{
\pi_0\Diff_\del(D^n) \ar[dd]^{M_{n*}} \ar@/_12ex/[ddd]_C^\cong  & 
\pi_{n-k}\Diff_\del(D^k) \ar[l]_\lambda \ar[rr]^d \ar[d]^{M_{k*}} && 
\pi_n SO_k \ar[d]^=\\
& \pi_{n+1}PL_k/O_k \ar[d]^{\wt i} \ar[dl]_S \ar[rr]^{\del} && 
\pi_nSO_k  \ar[d]^=\\
\pi_{n+1}PL/O &
\pi_{n+1}\wt{PL}_k/O_k  \ar[l]_{\wt S}^{} \ar[rr]^{\del} &&
\pi_nSO_k \\
\Theta_{n+1} \ar[u]_{\Psi}^\cong  & 
\GRS^k_{n+1} \ar[u]_\cong \ar[l]_{F} \ar[rr]^{\nu}  &&
\pi_{n+1} BSO_k \ar[u]_\cong
}\end{equation}
Here $\Psi$ is the map which sends a homotopy sphere $\Sigma$ to the element
represented by the tangent PL microbundle of the mapping cylinder
$\cyl(h\colon \Sigma \to S^{n+1})$ of an orientation preserving PL
homeomorphism $h$, along with its linear structure induced by the smooth
structure of $\Sigma$ on the $\Sigma$ end of the cylinder and its canonical trivialization at $S^{n+1}$ end. The map $\Psi$ is
an isomorphism by smoothing theory \cite[II Theorem 4.2]{H-M}. 
The map $\GRS^k_{n+1} \to \pi_{n+1}\widetilde{PL}_k/O_k$ is an isomorphism by
\cite[Corollary 6.7]{R-S1}: 
It is defined by sending the class of $(H,h)\colon (S^{n+k+1}, \Sigma^{n+1})\to (S^{n+k+1}, S^{n+1})$ to 
the normal block bundle $\nu_\cyl$ of $\cyl(h)$ inside $\cyl(H)$ along with its linear reduction at the $\Sigma^{n+1}$ end of the cylinder and its canonical trivialization at the other end. 
Finally, the map $\tilde S$ is obtained from the fact that the inclusion $PL\to \widetilde{PL}$ is an equivalence 
\cite[Corollary 5.5 (ii)]{R-S3};
 that is, there is no essential difference between stable PL (micro-)bundles and stable block bundles. 

We claim that the left lower square of \eqref{eq:big_diagram} is commutative up to sign. To see this, 
we may assume, increasing $k$ if necessary, that the normal block bundle $\nu_\cyl$ is given by a PL microbundle. Then, the sum of the two composites, applied to $[(H,h)]$, is represented by the direct sum microbundle $T\cyl(h)\oplus \nu_\cyl$ over $\cyl(h)$ along with its linear reduction at the front end and its canonical trivialization at the other end. 
But now, we have an isomorphism $T\cyl(h)\oplus\nu_\cyl\cong T\cyl(H)\vert_{\cyl(h)}$ of microbundles which extends isomorphisms $T\Sigma \oplus \nu_{\Sigma\subset S^{n+k+1}}\cong TS^{n+k+1}\vert_{\Sigma}$ and $TS^{n+1}\oplus \nu_{S^{n+1}\subset S^{n+k+1}}\cong TS^{n+k+1}\vert_{S^{n+1}}$ of vector bundles.

Since $H$ is PL isotopic to the identity (being an orientation preserving PL homeomorphism of the sphere), we conclude that the sum of the two composite maps, applied to $[(H,h)]$, represents the zero element.  

All other parts of this diagram commute up to possible signs: The commutativity of the squares on the right and of the triangle in the middle follows from the definitions.
That $M_{n*} \circ \lambda = S \circ M_{k*}$ follows from \cite[Lemma 2.5]{C-S}
and that $M_{n*} = \Psi^\tau \circ C$ is proven in \cite[Lemma 2.7]{C-S}. 
The lemma now follows by a diagram chase.
\end{proof}

\section{Concluding remarks} \label{s:cr}

In this section we discuss some of background to our results and state a conjecture
about the map $d_* \circ \sigma$.

\medskip\noindent \textbf{1.} 
The homotopy fibre of $d \colon \Diff_\del(D^k) \to \Omega^kSO_k$
is the $H$-space $\Diff^{\fr}_\del(D^k)$ of framing-preserving diffeomorphisms.  It is
the loop space of the classifying space $B\Diff^{\fr}_\del(D^k)$, which features in the recent work of 
Kupers and Randal-Williams \cite{K-RW}
on the rational homotopy groups of $\Diff_\del(D^k)$.  We see that $d$ is rationally trivial because
the Alexander trick implies that $d$ becomes null-homotopic under
the natural map $\Omega^k SO_k \to \Omega^k SPL_k$.
It is well known that $(SO_k)_\Q$ is Eilenberg-MacLane, detected by the suspensions
of the rational Pontrjagin classes and rational Euler class.  Since these classes are defined on $(SPL_k)_\Q$, it follows that $(SO_k)_\Q$ is a homotopy retract of $(SPL_k)_\Q$.
If $\Omega^k_0X$ denotes the connected component of the constant map, then it follows
that $( \Omega^k_0 SO_k)_\Q \simeq \Omega^k_0 (SO_k)_\Q$ is a homotopy 
retract of $(\Omega^k_0 SPL_k)_\Q$, showing that the map 
$d\colon \Diff_\del(D^k) \to  \Omega^k_0 SO_k$ is rationally nullhomotopic.

\medskip\noindent \textbf{2.} The proof of Theorem \ref{thm:main} relies on the fact that the normal bundle of any embedding
$\Sigma_{\xi, \xi} \hookrightarrow S^{28}$ is non-trivial.  
Despite the elementary argument we give for this in Section \ref{s:proofs}, computing the normal bundle of an embedding of a homotopy sphere $g \colon \Sigma^{n+1} \hookrightarrow S^{n+k+1}$ is a subtle problem.  Provided one is in the meta-stable range $n < 2k{-}4$, Haefliger \cite{H} proved that the isotopy class of $g$ depends only on the diffeomorphism type of $\Sigma$, so that, in particular, the normal bundle is independent of the choice of embedding. Hsiang, Levine and Sczarba \cite{H-L-S} proved that the latter statement holds even for $n< 2k{-}2$, defined the homomorphism
%
%
$$ \phi^k_{n+1} \colon \Theta_{n+1} \to \pi_{n+1}BSO_k, 
\quad \Sigma \mapsto \nu(\Sigma \subset S^{n+k+1}) \quad (n<2k{-}2)$$
%
and proved that $\phi^{13}_{16} \neq 0$; i.e.\ the exotic $16$-sphere embeds into $S^{29}$ with non-trivial normal bundle.  
Then Antonelli \cite{A} made a systematic study of normal bundles of homotopy spheres in the meta-stable range, which includes the statement that $\phi^{11}_{17} \neq 0$.

\medskip\noindent \textbf{3.} Concerning A'Campo's claim that $d_*$ vanishes for $i = 2$, we note that, since
$\phi^{13}_{16} \neq 0$, Lemma \ref{lem:embedding_of_homotopy_sphere} entails that if A'Campo's claim
holds, then the exotic $16$-sphere does not lie in the image of the map
$A \colon \pi_2\Diff_\del(D^{13}) \to \Theta_{16} = \Z/2$.  
This is consistent with computations we have made for the refined plumbing pairing
$$ \sigma \colon \pi_8SO_6 \otimes \pi_7SO_8 \to \pi_2\Diff_\del(D^{13}),$$
which show that $A \circ \sigma = 0$, even though 
$\sigma_M \colon \pi_8SO_7 \otimes \pi_7SO_8 \to \Theta_{16}$ is non-trivial,
a statement which can be deduced from \cite[Satz 12.1]{S}.



\medskip\noindent \textbf{4.} 
Finally, we present a conjectural description of the homomorphism
$$ d_* \circ \sigma \colon 
 \pi_p SO_{q-a} \otimes \pi_q SO_{p-b} \to \pi_{p+q}SO_{p+q-a-b-1}$$
in purely homotopy-theoretic terms. 

Let $h \colon \pi_iSO_j \to \pi_i(S^{j-1})$ be the map induced by the canonical projection 
$SO_j \to S^{j-1}$.  
For maps $f \colon W \to X$ and $f \colon Y \to Z$ let $f \ast g \colon W \ast Y \to X \ast Z$ be their {\em join}.  
Let $\del \colon \pi_{m+1}(S^k) \to \pi_mSO_k$ denote the boundary map in the homotopy
long exact sequence of the fibration $SO_k \to SO_{k+1} \to S^k$.
For compactness, we use the notation 
$$p' := p-b \quad \text{and} \quad q' := q-a$$
and let $\xi_1 \in \pi_pSO_{q'}$ and $\xi_2 \in \pi_qSO_{p'}$.
Then we have $h(\xi_1) \in \pi_p(S^{q'-1})$, $h(\xi_2) \in \pi_qS^{p'-1}$ and
$h(\xi_1) \ast h(\xi_2) \in \pi_{p+q+1}(S^{p'+q'-1})$, so that
$\del \bigl( h(\xi_1) \ast h(\xi_2) \bigr) \in \pi_{p+q}SO_{p'+q'-1}$.

In addition, we have the $J$-homomorphisms
$$ J_{p, q'} \colon \pi_pSO_{q'} \to \pi_{p+q'}S^{q'}
\quad \text{and} \quad
J_{q, p'} \colon \pi_qSO_{p'} \to \pi_{q+p'}S^{p'}$$
and we can suspend in the target of each of these to get the homomorphisms
$$ \Sigma^a \circ J_{p, q'} \colon \pi_pSO_{q'} \to \pi_{p+q}S^{q}
\quad \text{and} \quad
\Sigma^b \circ J_{q, p'} \colon \pi_qSO_{p'} \to \pi_{p+q}S^{p}.$$
We then take compositions with the maps induced by $\xi_i$, $i = 1, 2$ and the inclusions
$i_{p'} \colon SO_{p'} \to SO_{p'+q'-1}$ and 
$i_{q'} \colon SO_{q'} \to SO_{p'+q'-1}.$
Hence we have the homomorphisms
%
$$ 
\ol \xi_{2*} \colon
\pi_pSO_{q'} \xra{\Sigma^a \circ J_{p, q'}} \pi_{p+q}S^q \xra{\xi_{2*}} \pi_{p+q}SO_{p'}
\xra{i_{p'*}} \pi_{p+q}SO_{p'+q'-1}$$
and 
$$  
\ol \xi_{1*} \colon
\pi_qSO_{p'} \xra{\Sigma^b \circ J_{q, p'}} \pi_{p+q}S^p \xra{\xi_{1*}} \pi_{p+q}SO_{q'}
\xra{i_{q'*}} \pi_{p+q}SO_{p'+q'-1}.$$

\begin{conjecture} \label{conj:1}
Up to sign, the homomorphism
\[ d_* \circ \sigma \colon 
 \pi_p SO_{q'} \otimes \pi_q SO_{p'} \to \pi_{p+q}SO_{p'+q'-1} \]
is given by 
%
\[  d_*( \sigma(\xi_1, \xi_2)) = \del \bigl( h(\xi_1) \ast h(\xi_2) \bigr) + 
\ol \xi_{1*}(\xi_2) + \ol \xi_{2*}(\xi_1). \]
\end{conjecture}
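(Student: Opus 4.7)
The plan is to prove the conjecture by explicit computation of the derivative on a concrete representative of $\sigma(\xi_1,\xi_2)$ produced by the Antonelli-Burghelea-Kahn refined plumbing construction. First, I would write down a precise formula for the family $\Psi \colon D^{a+b+1}\to \Diff_\del(D^{p+q-a-b-1})$ representing $\sigma(\xi_1,\xi_2)$, following \cite{A-B-K}. Roughly, $D^{p+q-a-b-1}$ is decomposed as the boundary of a plumbed configuration of disk bundles $D^{q'}\to E_1 \to D^{p+1}$ and $D^{p'}\to E_2\to D^{q+1}$, and the family acts fiberwise by $\xi_1$ on $E_1$ and by $\xi_2$ on $E_2$, with suitable damping functions where the two regions overlap. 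The aim of this step is to obtain a formula concrete enough that the differential $d\Psi$, viewed as a map $S^{p+q}\to SO_{p'+q'-1}$ representing $d_*(\sigma(\xi_1,\xi_2))$, can be computed by the chain rule, much as in the explicit calculation carried out in the proof of Lemma~\ref{lem:embedding_of_homotopy_sphere}.

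The next step is to analyse $d\Psi$ by restricting to three complementary regions: two outer regions where only one of $\xi_1,\xi_2$ acts, and a plumbing neck where both act. In the neck, the derivative involves $\xi_1$ and $\xi_2$ simultaneously applied to the two fiberwise tangent directions; after standard manipulations involving the Hopf projection $SO_j\to S^{j-1}$ I expect this to identify with $\del(h(\xi_1)\ast h(\xi_2))$, giving the first term in the conjecture. In the outer region for $\xi_i$, the derivative is essentially $\xi_j$ (with $j\neq i$) acting on the fiber sphere of $E_i$ stabilised by the extra $D^a$ or $D^b$ factor; homotopically this should factor as $\Sigma^a J_{p,q'}(\xi_1)$ followed by post-composition with $\xi_2$ and inclusion into $SO_{p'+q'-1}$, which is exactly $\ol\xi_{2*}(\xi_1)$, and symmetrically for the other outer region.

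The principal difficulty will be verifying that the three regional contributions are genuinely additive: that the transition zones between the damping functions do not produce additional error terms, and that the linear sphere bundles $S(\xi_i)\subset E_i$ are correctly identified with their $J$-images. The key conceptual ingredient is the classical fact that via the Pontryagin-Thom construction the normal data of a linear sphere bundle is encoded by the $J$-homomorphism on its clutching function, which is what allows the linear clutching data to be translated into unstable homotopy classes of spheres. Matching the geometric join $S(\xi_1)\ast S(\xi_2)$ that arises in the plumbing neck with the unstable join $h(\xi_1)\ast h(\xi_2)$ will also require care with orientation and stabilisation conventions.

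Before attempting the general case I would verify the formula in the setting of Theorem~\ref{thm:main}, where $p=q=8$, $a=b=2$ and $\xi_1=\xi_2=\xi$: the three terms can in principle be computed individually from the known structure of $\pi_*SO$ and $\pi_*S$, and their sum in $\pi_{16}SO_{11}$ must be nonzero and map to $[\eta\eta_4]\in\coker(J_{17})$ under stabilisation, as established by the proof of Lemma~\ref{lem:Sigma}. A further consistency check would be to apply the stabilisation from \eqref{eq:commsquare}: the stable class of the conjectured formula should recover Milnor's known formula for the Kervaire-Milnor normal invariant of plumbing spheres in $\coker(J)$.
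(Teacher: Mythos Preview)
The statement you are attempting to prove is labeled a \emph{Conjecture} in the paper, and the paper does not prove it. After stating Conjecture~\ref{conj:1}, the authors only verify that the conjectural formula is consistent with Theorem~\ref{thm:main} and Corollary~\ref{cor:main} in the single case $p=q=8$, $a=b=2$, $\xi_1=\xi_2=\xi$; this is precisely the check you propose as a warm-up, but for the authors it is the endpoint, not the beginning. There is therefore no ``paper's own proof'' to compare your proposal against.

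Your outline is a plausible strategy, and indeed the authors hint at it themselves: the Remark following the first proof of Lemma~\ref{lem:embedding_of_homotopy_sphere} says it is ``tempting to hope'' that the explicit derivative computation there can be pushed further, but that ``the information obtained by the formulas in the proof given above seems rather limited.'' This is exactly the difficulty you flag as ``principal'': controlling the transition zones between the damping functions so that the three regional contributions are genuinely additive with no error terms. The paper's authors evidently did not see how to overcome this, which is why the statement remains a conjecture. Your plan does not contain a new idea for resolving that difficulty---it names it and hopes it works out---so as it stands the proposal is a reasonable research programme rather than a proof.
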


We briefly discus Conjecture \ref{conj:1} in the light ot Theorem \ref{thm:main} and Corollary \ref{cor:main}.
For $\xi \in \pi_8SO_6$ a generator,
$h(\xi) \in \pi_8S^5 \cong \pi^s_3$ is again a generator and we choose $\xi$
so that $h(\xi) = \nu_5$.
Hence Conjecture \ref{conj:1} gives 
$d_*(\sigma(\xi, \xi)) = \del(\nu_5 \ast \nu_5) + 2\ol \xi_*(\xi)$.
Now $\pi_{16}S^8 \cong (\Z/2)^4$, which entails that $2 \ol \xi_*(\xi) = 0$ 
and the proof of Corollary \ref{cor:main} shows that $d_*(\sigma(\xi, \xi)) = \del(\nu^2_{11})$.
Since $\nu_5 \ast \nu_5 = \nu^2_{11}$, 
Conjecture \ref{conj:1} is consistent with Theorem \ref{thm:main} and 
Corollary \ref{cor:main},
with both giving the same non-zero expression for
$d_* \circ \sigma \colon \pi_8SO_6 \tensor \pi_8SO_6 \to \pi_{16}SO_{11}$.

\bigskip
\noindent
\emph{Diarmuid Crowley}\\
  \vskip -0.125in \noindent
  {\small
  \begin{tabular}{l}%
    School of Mathematics and Statistics\\
    University of Melbourne, Australia\\
    \textsf{email:~dcrowley@unimelb.edu.au} \\ 
    \textsf{web:~www.dcrowley.net}
  \end{tabular}}

\vskip 0.75cm
\noindent
\emph{Thomas Schick}\\  
\vskip -0.125in \noindent
{\small
  \begin{tabular}{l}%
Mathematisches Institut, \\Universit\"at G{\"o}ttingen, Germany \\
    \textsf{email:~thomas.schick@math.uni-goettingen.de}\\
    \textsf{web:~www.uni-math.gwdg.de/schick}
  \end{tabular}}

\vskip 0.75cm
\noindent
\emph{Wolfgang Steimle}\\  
\vskip -0.125in \noindent
{\small
  \begin{tabular}{l}%
Institut f\"ur Mathematik\\ Universit\"at Augsburg, Germany \\
    \textsf{email:~wolfgang.steimle@math.uni-augsburg.de}\\
    \textsf{web:~www.uni-augsburg.de/de/fakultaet/mntf/math/prof/diff/team/wolfgang-steimle}
  \end{tabular}}


\begin{thebibliography}{999}

\begin{footnotesize}

\bibitem[Ad]{Adams} J. F. Adams,  {\em Vector fields on spheres}, Ann. of Math. {\bf 75} (1962), 603--632.

\bibitem[An]{A} P. L. Antonelli, {\em On stable diffeomorphism of exotic spheres in the metastable range}, Canad. J. Math. {\bf 23} (1971), 579--587. 

\bibitem[A-B-K]{A-B-K}  P. L. Antonelli, D. Burghelea and P. J. Kahn, {\em Gromoll groups, ${\rm Diff}(S^n)$ and bilinear constructions of exotic spheres}, Bull. Amer. Math. Soc. {\bf 76} (1970), 772--777.

\bibitem[B-L]{B-L}  D.~Burghelea and R.~Lashof, {\em The homotopy type of the space of diffeomorphisms. I, II.}, Trans. Amer. Math. Soc. {\bf 196} (1974), 1--36; ibid. 37--50. 

\bibitem[B-S]{B-S} R. Burklund and A. Senger,
{\em On the high-dimensional geography problem.} Preprint, arXiv:2007.05127.

\bibitem[C-S]{C-S} D. Crowley and T. Schick, 
{\em The Gromoll filtration, KO-characteristic classes and metrics of positive scalar curvature},
Geom.~Topol.~{\bf 17} (2013), 1773--1789.  

\bibitem[G]{G} D. Gromoll,  {\em Differenzierbare Strukturen und Metriken positiver Krümmung auf   Sphären}, Math. Ann. {\bf 164} (1966), 353--371.

\bibitem[H]{H} A. Haefliger, {\em Plongements diff\'erentiables de veri\'et\'es}, Comment. Math. Helvet. {\bf 36} (1961), 155--176.


\bibitem[H-M]{H-M} M.~W.~Hirsch and B.~Mazur, 
{\em Smoothings of piecewise linear manifolds},
Ann.\,\,of Math.\,\,Stud.\,\,Princeton University Press, University of Tokyo Press 1974.

\bibitem[H-L-S]{H-L-S} Hsiang, J. Levine and R. H. Szczarba, {\em On the normal bundle of a homotopy sphere embedded in Euclidean space}, Topology {\bf 3} (1965), 173--181.

\bibitem[K]{Ke} M. A. Kervaire, {\em Some non-stable homotopy groups of Lie groups}, Illinois J. Math. {\bf 4} (1960), 161--169. 

\bibitem[K-RW]{K-RW} A. Kupers and O. Randal-William,
{\em On diffeomorphisms of even-dimensional discs}.
Preprint, arXiv:2007.13884.


\bibitem[L]{L} J. P. Levine, {\em Lectures on Groups of Homotopy Spheres}, Algebraic and geometric topology 62--95. Springer, Berlin, Heidelberg, 1985. 

\bibitem[M]{M} J. Milnor, {\em Microbundles. {I}}, Topology {\bf 3} (1964), 53--80.

\bibitem[R-S1]{R-S1} C. P. Rourke and B. J. Sanderson, {\em Block bundles. I}, Ann. of Math.  {\bf 87} (1968), 1--28.
%

\bibitem[R-S3]{R-S3} C. P. Rourke and B. J. Sanderson, {\em Block bundles. III. Homotopy theory}, Ann.\ of Math.\ {\bf 87} (1968), 431--483. 

\bibitem[S]{S} S. Stolz, {\em Hochzusammenh\"angende Manniffaltigkeiten und
    ihre R\"ander}, Lecture Notes in Mathematics 1116, Springer-Verlag,
  Berlin, 1985.

\bibitem[T]{T} H. Toda, {\em Composition methods in homotopy groups of spheres}, Princeton University Press, 1962.

\bibitem[W1]{W1} C.~T.~C.~Wall, {\em Classification of $(n{-}1)$-connected $2n$-manifolds}, Ann.~of Math.~{\bf 75} (1962), 163--189.

\bibitem[W2]{W2} C. T. C. Wall, {\em Classification problems in differential topology - VI}, Topology {\bf 6} (1967), 273--296.

\end{footnotesize}

\end{thebibliography}
\end{document}